\numberwithin{equation}{section}
\newtheorem{thm}{Theorem}[section]
\newtheorem{prop}[thm]{Proposition}
\newtheorem{lem}[thm]{Lemma}
\newtheorem{rem}[thm]{Remark}
\title{Lotka-Volterra with randomly fluctuating environments: a full description}
\author{Florent Malrieu, Tran Hoa Phu}
\date{\today}
\begin{document}

\maketitle

\begin{abstract}
In this note, we study the long time behavior of Lotka-Volterra systems whose 
coefficients vary randomly. Benaïm and Lobry established that randomly 
switching between two environments that are both favorable to the same 
species may lead to four different regimes: almost sure extinction of one of 
the two species, random extinction of one species or the other and persistence 
of both species. Our purpose here is to provide a complete description of 
the model. In particular, we show that any couple of environments may lead 
to the four different behaviours of the stochastic process depending on the
jump rates.
\end{abstract}


\section{Introduction}\label{sec:intro}

For a given set of positive parameters $\varepsilon = (a,b,c,d,\alpha,\beta),$ 
consider the Lotka-Volterra differential system in $\mathbb{R}_+^2,$ is given by 
\begin{displaymath}
\left\{\begin{array}{llll}
x' = \alpha x(1-ax-by)\\
y' = \beta y (1-cx-dy)\\
(x_0,y_0) \in \mathbb{R}_+^2
\end{array}\right.
\end{displaymath}
We denote by $\displaystyle F_{\varepsilon}$ the associated vector field: 
$(x',y')=F_{\varepsilon}(x,y)$. Let us note already that when $a<c$ and $b<d$, 
the point $(1/a,0)$ attracts any path starting in $(0,+\infty)^2$. We say that 
the environment is favorable to species $x$. Similarly, when $a>c$ and $b>d$, 
the point $(0,1/d)$ attracts any path starting in $(0,+\infty)^2$. We say that the 
environment is favorable to species $y$. See \cite{MZ} for a detailed presentation 
of the four generic configurations. The environment is said to be of 
\begin{itemize}
\item Type 1: if $a<c, b<d$ (favorable to species $x$)
\item Type 2: if $a>c, b>d$ (favorable to species $y$)
\item Type 3: if $a>c, b<d$ (persistence)
\item Type 4: if $a<c, b>d$ (extinction of species $x$ or $y$ depending on the starting point)
\end{itemize}
Consider two such systems $\varepsilon_0=(a_0,b_0,c_0,d_0,\alpha_0,\beta_0)$ 
and $\varepsilon_1=(a_1,b_1,c_1,d_1,\alpha_1,\beta_1)$ and introduce the random 
process $\lbrace (X_t,Y_t,I_t)\rbrace$ on $\mathbb{R}\times \mathbb{R}\times\{0,1\}$ 
obtained by switching between these two deterministic dynamics, at 
rates $\lambda_0,\lambda_1$. More precisely, we consider the Markov process  
driven by the following generator 
\[
Lf(z,i)=F_i(z)\cdot \nabla_z f(z,i)+\lambda_i(f(z,1-i)-f(z,i)),\quad (z,i)\in \mathbb{R}^2\times\{0,1\}.
\]
Equivalently, ${(I_t)}_{t\geq 0}$ is a Markov process on $\{0,1\}$ with jump rate $\lambda_0$
and $\lambda_1$, that is 
\[
\mathbb{P}(I_{t+s} = 1-i| I_t=i,\mathcal{F}_t) = \lambda_i s + o(s),
\]
where $\mathcal{F}_t$ is the sigma field generated by $\lbrace I_u,u \leq t\rbrace$. Finally, 
$(X_t,Y_t)$ is solution of 
\[
(X_t',Y_t') = F_{\varepsilon_{I_t}}(X_t,Y_t).
\]

This process on $\mathbb{R}^2 \times \lbrace 0,1 \rbrace$ has already been 
studied in \cite{BL,MZ}. It belongs to the class of the piecewise deterministic 
Markov processes introduced by Davis \cite{MR790622}. See also \cite{MR3434260} 
for a recent review of the application areas of such processes. 
Let us introduce the invasion rates of species x and y defined in \cite{BL} as
\[
\Lambda_y = \displaystyle \int \beta_0(1-c_0x)\mu(dx,0) + \int \beta_1(1-c_1x)\mu(dx,1), 
\]
\[
\Lambda_x = \displaystyle \int \alpha_0(1-b_0y)\widehat{\mu}(dy,0) + \int \alpha_1(1-b_1x)
\widehat{\mu}(dy,1),
\]
where $\mu$ is the invariant probability measure of $(X_t,I_t)$ associated to equation: 
\[
X_t'=\alpha_{I_t}X_t(1-a_{I_t}X_t),
\]
and $ \widehat{\mu}$ is the invariant probability measure of $(Y_t,I_t)$ associated 
to equation: 
\[
Y_t'=\beta_{I_t}Y_t(1-d_{I_t}Y_t).
\]
The meaning of $\Lambda_y$ is the following: when species $y$ is close to extinction, 
species $x$ behaves approximately as $(X'_t,0) = F_{\varepsilon_{I_t}}(X_t,0)$ 
and $\Lambda_y$ is the growth rate of species $y$ with respect to invariant 
measure $\mu$ of $(X,I)$. Note that the invasion rates depend on the jump rates 
$(\lambda_0,\lambda_1) \in (0,+\infty)^2$. For every $(\lambda_0,\lambda_1)\in (0,+\infty)^2 $, 
we have two parametrizations of 
these jump rates:
\[
(s,t)\in [0,1]\times (0,+\infty):\quad st=\lambda_0,\quad(1-s)t=\lambda_1.
\]
\[
(u,v) \in [0,1]\times (0,+\infty):\quad uv=\lambda_0/{\alpha_0},\quad (1-u)v=\lambda_1/{\alpha_1}.
\]
The change of parameters $(u,v)=\xi(s,t)$ is triangular in the sense that $u$ only 
depends on $s$
\[
(u,v)=\xi(s,t)=\big( \dfrac{s\alpha_1}{(1-s)\alpha_0 + s\alpha_1},
\dfrac{t}{\alpha_0\alpha_1}((1-s)\alpha_0+s\alpha_1)\big ).
\]

Let us denote the invasion rates in the $(u,v)$ coordinates by 
\[
\tilde{\Lambda}_x(u,v) = \Lambda_x(\xi^{-1}(u,v)) \quad \text{and} \quad  
\tilde{\Lambda}_y(u,v) = \Lambda_y(\xi^{-1}(u,v)).
\]
It is established in \cite{BL} that signs of $\tilde{\Lambda}_x$ and $\tilde{\Lambda}_y$ 
determine the long time behavior of $(X_t,Y_t)$.

\bigskip

\begin{tabular}{ |p{1.3cm}||p{6cm}|p{6cm}|}
 \hline
 &$\tilde{\Lambda}_y > 0$&$\tilde{\Lambda}_y < 0$\\
 \hline
 $\tilde{\Lambda}_x >0$  &persistence of the two species &extinction of species $y$\\
 \hline
 $\tilde{\Lambda}_x <0$  &extinction of species $x$ & extinction of species $x$ or $y$\\
 \hline
\end{tabular}

\bigskip
Moreover, in \cite{BL} it is shown that two environments of Type 1 may lead to 
four regimes for the stochastic process. This surprising result is reminiscent of 
switched stable linear ODE studied in \cite{BH,BLMZ1}.
 
A fundamental property of the model is that, for all $0 \leq s \leq 1$, the vector 
field $(1-s)F_{\varepsilon_0} + sF_{\varepsilon_1}$ is the Lotka-Volterra system 
associated to the environment $\varepsilon_s = (a_s,b_s,c_s,d_s,\alpha_s,\beta_s)$ with
\begin{equation}\label{eq:alphas}
\alpha_s = s\alpha_1+ (1-s)\alpha_0 ,\quad 
a_s=\dfrac{s\alpha_1 a_1 + (1-s)\alpha_0 a_0}{\alpha_s},
\quad b_s=\dfrac{s\alpha_1 b_1 + (1-s)\alpha_0 b_0}{\alpha_s}, 
\end{equation}
\begin{equation}\label{eq:betas}
\beta_s = s\beta_1+ (1-s)\beta_0 ,\quad 
c_s=\dfrac{s\beta_1 c_1 + (1-s)\beta_0 c_0}{\beta_s},
\quad d_s=\dfrac{s\beta_1 d_1 + (1-s)\beta_0 d_0}{\beta_s}.
\end{equation}
Set
\[
I= \lbrace 0 \leq s \leq 1: a_s>c_s\rbrace \quad \text{and} 
\quad J= \lbrace 0 \leq s \leq 1: b_s > d_s\rbrace.
\]
We denote by $\tilde{I}$ the image of $I$ for the other parametrization.
\begin{rem}
As noticed in \cite{BL}, if $\varepsilon_0$ and $\varepsilon_1 $ are of Type 1 
then $I$ or $J$ may generically be empty or an open interval which 
closure is contained in $(0,1)$.
\end{rem}
Let us recall below the key result in \cite{MZ} about the expression of the invasion rates.
\begin{lem}\cite[Lemma 1.2]{MZ}\label{lem:phi}
Assume that $\varepsilon_0$ and $\varepsilon_1$ are of Type 1 and, w.l.g., $a_0<a_1$. 
The quantity $\tilde{\Lambda}_y$ can be rewritten as: 
\[
\tilde{\Lambda}_y(u,v) = \dfrac{1}{(a_1-a_0)(\dfrac{1}{\alpha_0}(1-u)
+\dfrac{1}{\alpha_1}u)}\mathbb{E}[\phi(U_{u,v})]
\]
where $\phi:[0,1] \rightarrow \mathbb{R}$ is defined by 
\[
\phi(y) =(a_0+(a_1-a_0)y)P(\dfrac{1}{a_0+(a_1-a_0)y}),
\]
where
\begin{equation}\label{eq:P}
\quad P(x)=\Big(\dfrac{\beta_1}{\alpha_1}(1-c_1x)(1-a_0x)
-\dfrac{\beta_0}{\alpha_0}(1-c_0x)(1-a_1x)\Big)\dfrac{a_1-a_0}{|a_1-a_0|},
\end{equation}
and $U_{u,v}$ is a Beta distributed Beta$(uv,(1-u)v)$ random variable.
Moreover, $\phi$ has the following properties: 
\begin{itemize}
\item If $I$ is empty then $\phi$ is nonpositive.
\item If $I$ is nonempty ($I=(u_1,u_2)$) then $\phi$ is concave, negative 
on $(0,u_1) \cup (u_2,1)$ and positive on $\tilde{I}=(u_1,u_2)$.
\end{itemize}
\end{lem}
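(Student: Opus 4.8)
The plan is to split the argument into two independent parts: first derive the integral representation of $\tilde\Lambda_y$, then read off the sign and concavity of $\phi$ from one explicit algebraic identity.

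For the representation, the first step is to linearise the one–dimensional dynamics carrying $\mu$. Under $W=1/X$ the logistic flow $X'=\alpha_i X(1-a_iX)$ becomes the affine flow $W'=\alpha_i(a_i-W)$, and after the normalisation $Y=(W-a_0)/(a_1-a_0)$ the two vector fields are $-\alpha_0 Y$ in environment $0$ and $\alpha_1(1-Y)$ in environment $1$, so $(Y_t,I_t)$ is a switched flow on $[0,1]\times\{0,1\}$. I would then write the stationary forward system for the densities $p_0,p_1$: summing the two equations shows the total flux $-\alpha_0 y\,p_0+\alpha_1(1-y)p_1$ is constant, hence $0$, which reduces the system to a single separable ODE. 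Its solution is
\[
p_0(y)\propto y^{uv-1}(1-y)^{(1-u)v},\qquad p_1(y)\propto y^{uv}(1-y)^{(1-u)v-1},
\]
with the identification $\lambda_0/\alpha_0=uv$, $\lambda_1/\alpha_1=(1-u)v$ built into the $(u,v)$ parametrisation; these are the Beta$(uv,(1-u)v)$ density weighted by $(1-y)$ and by $y$. Substituting $x=1/W(y)$ in $\tilde\Lambda_y$ and using the collapsing identities $(1-a_0x)W=(a_1-a_0)y$ and $(1-a_1x)W=-(a_1-a_0)(1-y)$ turns the integrand into $\tfrac{\alpha_0}{a_1-a_0}\phi(y)$ times the Beta density, and tracking the normalising constant $\bigl((1-u)+\tfrac{\alpha_0}{\alpha_1}u\bigr)^{-1}$ yields exactly the stated prefactor.

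For the properties of $\phi$, I would first record that $\phi(y)=W\,P(1/W)$ with $W=a_0+(a_1-a_0)y$ affine and $P$ quadratic, so $\phi(y)=p_0W+p_1+p_2/W$ and $\phi''(y)=2p_2(a_1-a_0)^2/W^3$ has the constant sign of the leading coefficient $p_2$ of $P$. Using only the Type 1 inequalities $a_i<c_i$, a direct evaluation gives $P(1/a_0)<0$ and $P(1/a_1)<0$, i.e. $\phi(0),\phi(1)<0$. The crucial computation is the sign identity: writing $u=u(s)$ for the reparametrisation one has $W(u)=a_s$, and substituting $x=1/a_s$ together with $a_s-a_0=u(a_1-a_0)$, $a_s-a_1=-(1-u)(a_1-a_0)$ and collecting the $\beta$–weighted terms gives
\[
\phi(u)=\frac{(a_1-a_0)\beta_s}{a_s\,\alpha_s}\,(a_s-c_s).
\]
Since $s\mapsto u(s)$ is an increasing bijection of $[0,1]$, this identifies $\{\phi>0\}$ with $\tilde I$ exactly. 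If $I=\emptyset$ then $a_s\le c_s$ throughout and $\phi\le 0$. If $I=(u_1,u_2)$ is a nonempty interval then $\phi<0$ at both endpoints and $\phi>0$ on $(u_1,u_2)$; since a convex (or affine) function negative at $0$ and $1$ stays negative on $[0,1]$, this profile forces $p_2<0$, whence $\phi$ is concave.

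The main obstacle will be the explicit determination of the invariant measure: the substitution $W=1/X$ is what makes the stationary equations solvable and exposes the Beta law, and carrying the normalising constants through the change of variables so that the final prefactor matches is where the bookkeeping is delicate. By contrast, once the representation is secured, the sign statement and the concavity both follow from the single identity for $\phi(u)$ above, combined with the elementary observation that a constant-sign second derivative rules out the negative–positive–negative profile for a convex $\phi$.
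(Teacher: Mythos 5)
Your proposal is correct: the computation of the invariant law of $(Y_t,I_t)$ via $W=1/X$ (zero stationary flux, Beta$(uv,(1-u)v)$ density with the weights $(1-y)$ and $\tfrac{\alpha_0}{\alpha_1}y$, normalising constant $((1-u)+\tfrac{\alpha_0}{\alpha_1}u)^{-1}$), the identity $\phi(u(s))=\tfrac{(a_1-a_0)\beta_s}{a_s\alpha_s}(a_s-c_s)$, and the convexity contradiction forcing $p_2<0$ all check out. Note that the paper itself gives no proof of this statement --- it is imported verbatim as \cite[Lemma 1.2]{MZ} --- so there is nothing internal to compare against; your argument is the standard one behind that cited lemma.
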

Our first result is the precise study of the properties of $\tilde{\Lambda}_x$ and $\tilde{\Lambda}_y$ 
with two environments $\varepsilon_0, \varepsilon_1$ that are respectively of 
Type 1 and Type 2. In particular, we describe the regions where $\tilde{\Lambda}_x$ 
and $\tilde{\Lambda}_y$ are positive.

\begin{thm}\label{th:12}(Shape of the regions). Assume that $\varepsilon_0$ and $\varepsilon_1$ 
are respectively of Type 1 and Type 2. Then, there exists a function $u \mapsto v_y(u)$ 
from $(0,1) \rightarrow [0,\infty]$, such that 
$\tilde{\Lambda}_y(u,v) <0$ when $v< v_y(u)$ and $\tilde{\Lambda}_y(u,v)>0$ 
when $v>v_y(u)$. Let $a$ be the coefficient of second degree of polynomial 
$P$ given by \eqref{eq:P}.

If $a<0$, there exists $0<\alpha<\overline{\alpha}<1$ such that $v_y$ is infinite on 
$[0,\alpha]$, is decreasing and continuous on $(\alpha,\overline{\alpha})$, tends to $+ \infty$ at 
$\alpha$, tends to $0$ at $\overline{\alpha}$ and is equal to $0$ on $[\overline{\alpha},1]$. 

If $a>0$, there exists $0<\overline{\alpha}<\alpha<1$ such that $v_y$ is equal to $0$ on 
$[0,\overline{\alpha}]$, is increasing and continuous on $(\overline{\alpha},\alpha)$, 
tends to $0$ at $\overline{\alpha}$, tends to $+ \infty$ at 
$\alpha$, and is infinite on $[\alpha,1]$. 

Moreover, $\alpha$ and $\overline{\alpha}$ are explicit.
\end{thm}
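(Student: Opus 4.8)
The plan is to reduce the entire statement to the analysis of the single scalar function $\Psi(u,v):=\mathbb{E}[\phi(U_{u,v})]$, where $\phi$ and $U_{u,v}\sim\mathrm{Beta}(uv,(1-u)v)$ are as in Lemma~\ref{lem:phi}. The first observation is that the integral representation of Lemma~\ref{lem:phi} does not really use that both environments are of Type~1: it only involves the invariant measure $\mu$ of the one-dimensional logistic switch $X_t'=\alpha_{I_t}X_t(1-a_{I_t}X_t)$, which depends on $(a_0,a_1,\alpha_0,\alpha_1)$ and on the rates, but not on the $c$'s or $d$'s. Hence the same formula holds here, and (taking as in Lemma~\ref{lem:phi} the labelling $a_0<a_1$, the case $a_0>a_1$ being symmetric) we have $\operatorname{sign}\tilde\Lambda_y(u,v)=\operatorname{sign}(a_1-a_0)\cdot\operatorname{sign}\Psi(u,v)$ up to the strictly positive prefactor. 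The next step is to extract from \eqref{eq:P} the two data that drive everything: the boundary values $\phi(0)=a_0P(1/a_0)$ and $\phi(1)=a_1P(1/a_1)$, and the leading coefficient $a$ of $P$. Since the factors $(1-a_0x)$ and $(1-a_1x)$ vanish at $x=1/a_0$ and $x=1/a_1$, a direct computation gives $\phi(0)=-\tfrac{\beta_0}{\alpha_0}(1-c_0/a_0)(1-a_1/a_0)a_0$ and $\phi(1)=\tfrac{\beta_1}{\alpha_1}(1-c_1/a_1)(1-a_0/a_1)a_1$; the Type~1 inequality $a_0<c_0$ and the Type~2 inequality $c_1<a_1$ then force $\phi(0)$ and $\phi(1)$ to have opposite signs, so w.l.o.g. $\phi(0)<0<\phi(1)$. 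Writing $\phi(y)=p_0m+p_1+p_2/m$ with $m=a_0+(a_1-a_0)y>0$ and $P=p_2x^2+p_1x+p_0$, one reads off that $\phi''$ has the sign of $p_2=a$ on $\{m>0\}$, so $\phi$ is concave when $a<0$ and convex when $a>0$; combined with $\phi(0)\phi(1)<0$ this produces a unique interior zero $\alpha$ of $\phi$ and fixes the sign of $\phi$ on either side of it.

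Next I would identify the two limiting regimes in $v$. Since $U_{u,v}$ has mean $u$ and variance $u(1-u)/(v+1)$, as $v\to\infty$ its law concentrates at $u$, so $\Psi(u,v)\to\phi(u)$ (the fast-switching limit), while as $v\to0$ the Beta law degenerates to the two-point measure putting mass $u$ at $1$ and $1-u$ at $0$, so $\Psi(u,v)\to L(u):=u\phi(1)+(1-u)\phi(0)$ (the slow-switching limit), which is affine in $u$ with a unique zero $\overline\alpha=-\phi(0)/(\phi(1)-\phi(0))\in(0,1)$. Because the graph of a concave (resp. convex) $\phi$ lies above (resp. below) the chord $L$, we get $\alpha<\overline\alpha$ when $a<0$ and $\overline\alpha<\alpha$ when $a>0$, which is already the two orderings in the statement. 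The key analytic input is the monotonicity of $v\mapsto\Psi(u,v)$: the family $(U_{u,v})_v$ is ordered in the convex order, decreasing $v$ being a mean-preserving spread of the Beta law, so $v\mapsto\Psi(u,v)$ is monotone, and after multiplication by $\operatorname{sign}(a_1-a_0)$ so is $v\mapsto\tilde\Lambda_y(u,v)$ (the direction of this monotonicity, together with the sign of $a_1-a_0$, is exactly what distinguishes the two cases of the theorem). In particular $\tilde\Lambda_y(u,\cdot)$ changes sign at most once, which is what lets us define $v_y(u)$; comparing the signs of its two limits $\phi(u)$ and $L(u)$ shows that $v_y(u)$ is constant (equal to $0$ or to $+\infty$) precisely when $u$ lies outside the interval bounded by $\alpha$ and $\overline\alpha$, where both limits have the same sign, and is finite exactly on that interval, where the two limits straddle $0$.

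It remains to describe $v_y$ on the transition interval and to justify the boundary behaviour. There $v_y(u)$ is the unique solution of $\Psi(u,v)=0$, and since $\partial_v\Psi\ne0$ by strict monotonicity, the implicit function theorem gives that $v_y$ is continuous (indeed smooth). As $u$ approaches the endpoint where $\phi$ changes sign, the fast limit $\phi(u)\to0$ pushes the zero to $+\infty$, while as $u$ approaches the endpoint where $L$ changes sign, the slow limit $L(u)\to0$ forces $v_y(u)\to0$, matching the stated limits. The monotonicity of $u\mapsto v_y(u)$ is the main obstacle: the implicit function theorem gives $\operatorname{sign}v_y'=-\operatorname{sign}(\partial_u\Psi)$ on $\{\Psi=0\}$, so one must prove that $\partial_u\Psi$ keeps a constant sign there. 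I would establish this from the stochastic monotonicity of $(U_{u,v})_u$ in $u$ (for fixed $v$) combined with the single-crossing of $\phi$ (sign $-$ then $+$); the delicate point is that $\phi$ is not monotone, so $\partial_u\Psi$ cannot be signed termwise and must be controlled by exploiting that on $\{\Psi=0\}$ the positive and negative parts of $\phi$ are exactly balanced under the law of $U_{u,v}$. Finally, both thresholds are explicit: $\overline\alpha=-\phi(0)/(\phi(1)-\phi(0))$, and $\alpha$ is recovered from the genuine quadratic $p_0m^2+p_1m+p_2=0$ in $m=a_0+(a_1-a_0)\alpha$ (that is, from $P(1/m)=0$) by the quadratic formula, giving the announced closed forms.
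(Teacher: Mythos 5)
Your overall skeleton matches the paper's: the two limits $v\to\infty$ (concentration at $u$, giving $\phi(u)$ and the threshold $\alpha=\partial\tilde I$) and $v\to0$ (the two-point limit, giving the affine function and the threshold $\overline{\alpha}$), the monotonicity of $v\mapsto\mathbb{E}[\phi(U_{u,v})]$ to define $v_y$, and the sign comparison of the two limits to decide where $v_y$ is $0$, $+\infty$, or finite. Your derivation of $\alpha\lessgtr\overline{\alpha}$ by comparing the concave/convex $\phi$ with its chord is a nice geometric shortcut for what the paper does by brute-force substitution of $\overline{\alpha}$ into the quadratic $A u^2+Bu+C$; that part is a genuine (and cleaner) alternative.

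However, there is a real gap at the step you yourself flag as ``the main obstacle'': the monotonicity of $u\mapsto v_y(u)$ on the transition interval. You propose to sign $\partial_u\mathbb{E}[\phi(U_{u,v})]$ on the zero set using ``stochastic monotonicity of $(U_{u,v})_u$ combined with the single-crossing of $\phi$'', but first-order stochastic dominance does \emph{not} preserve single-crossing of a non-monotone integrand, so the tool you invoke is insufficient as stated, and you explicitly leave the balancing argument uncompleted. The paper closes this with a different and much softer ingredient that you never bring to bear on $\Psi$: by (the adaptation of) \cite[Lemma 4.1]{MZ}, the map $u\mapsto\mathbb{E}[\phi(U_{u,v})]$ is itself \emph{concave} in $u$ for each fixed $v$ when $\phi$ is concave (convex when $\phi$ is convex) --- you only use the concavity of $\phi$, not of its Beta average. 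Given this, monotonicity of $v_y$ is a three-line argument: if $\delta_1<\delta_2$ lie in $(\alpha,\overline{\alpha})$ and $\delta_3\in(\overline{\alpha},1)$, then $\tilde{\Lambda}_y(\delta_1,v_y(\delta_1))=0$ and $\tilde{\Lambda}_y(\delta_3,v_y(\delta_1))>0$, so concavity in $u$ forces $\tilde{\Lambda}_y(\delta_2,v_y(\delta_1))>0$, whence $v_y(\delta_2)<v_y(\delta_1)$ by monotonicity in $v$. (A correct version of your route does exist --- the Beta family $\mathrm{Beta}(uv,(1-u)v)$ is ordered in the \emph{likelihood-ratio} sense in $u$, and Karlin's variation-diminishing property then transfers the single sign change of $\phi$ to $u\mapsto\mathbb{E}[\phi(U_{u,v})]$ --- but that is a different lemma from the one you state, and you would need to prove it.) Until one of these is supplied, the ``decreasing/increasing'' clause of the theorem, which is its main content, is not established.
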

 The second result is the following theorem.
\begin{thm}\label{th:99}
For any $(i,j)$ in $\lbrace 1,2,3,4 \rbrace^2$, there exist two environments 
$\varepsilon_0$ of Type i and $\varepsilon_1$ of Type j such that the associated 
stochastic process has four possible regimes depending on the jump rates.
\end{thm}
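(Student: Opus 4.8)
The plan is to translate the statement into a purely analytic one about the signs of the two invasion rates and then, for each ordered pair of types, to exhibit parameters for which the map $(u,v)\mapsto(\tilde{\Lambda}_x(u,v),\tilde{\Lambda}_y(u,v))$ visits all four open sign-quadrants as the jump rates range over $(0,+\infty)^2$. By the classification recalled in the introduction, the four regimes correspond exactly to the four sign patterns $(+,+)$, $(+,-)$, $(-,+)$, $(-,-)$ of $(\tilde{\Lambda}_x,\tilde{\Lambda}_y)$, so it suffices to realize these four signs. I would first cut down the sixteen cases using two symmetries. Exchanging the labels of the two environments amounts to the reparametrization $s\mapsto 1-s$ and sends $(i,j)$ to $(j,i)$ without changing the set of attainable sign patterns. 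Exchanging the two species (the involution $\alpha\leftrightarrow\beta$, $a\leftrightarrow d$, $b\leftrightarrow c$) interchanges $\tilde{\Lambda}_x$ and $\tilde{\Lambda}_y$, hence preserves the four-regime property, while on types it swaps $1\leftrightarrow 2$ and fixes $3$ and $4$. Up to these symmetries only the representatives $(1,1),(1,2),(1,3),(1,4),(3,3),(3,4),(4,4)$ remain.

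Next I would exploit the decoupling visible in the definitions: $\tilde{\Lambda}_y$ depends only on $(\alpha_i,\beta_i,a_i,c_i)$, through $\mu$ and the integrand $\beta_i(1-c_ix)$, whereas $\tilde{\Lambda}_x$ depends only on $(\alpha_i,\beta_i,b_i,d_i)$. Having fixed the speeds $\alpha_i,\beta_i$, the sign of $\tilde{\Lambda}_y$ is governed by the set $I=\{s:a_s>c_s\}$ and that of $\tilde{\Lambda}_x$ by $J=\{s:b_s>d_s\}$; each is the positivity set on $[0,1]$ of an explicit quadratic (the numerator of $a_s-c_s$, resp. of $b_s-d_s$), whose signs at $s=0$ and $s=1$ are prescribed by the types. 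This lets me shape $I$ via $(a_i,c_i)$ and $J$ via $(b_i,d_i)$ almost independently. The main analytic input is the behaviour in the two scaling limits, read off from Lemma \ref{lem:phi} and Theorem \ref{th:12}: as $v\to+\infty$ (fast switching) $U_{u,v}$ concentrates at its mean $u$, so $\mathrm{sign}\,\tilde{\Lambda}_y=\mathrm{sign}\,\phi(u)$ equals the sign of the invasion rate of the averaged environment $\varepsilon_s$, i.e. $\tilde{\Lambda}_y>0\iff s\in I$ and $\tilde{\Lambda}_x>0\iff s\notin J$; as $v\to 0$ (slow switching) $U_{u,v}$ becomes Bernoulli$(u)$, so each invasion rate tends to a convex combination of its two single-environment values and changes sign once in $s$. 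In either limit the mixed patterns $(+,-),(-,+)$ arise from these monotone sign changes, while the middle $s$-interval carries one of the two pure patterns, determined by the relative order of the crossovers.

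For the five cases $(1,1),(1,3),(1,4),(3,3),(4,4)$ the prescribed endpoint signs allow $I$ or $J$ to be an interior interval (or the complement of one); then already in the single fast-switching limit $I$ and $J$ interleave, and one checks that four suitable values of $s$ taken at a common large total jump rate produce the four patterns. The delicate cases are $(1,2)$ and $(3,4)$: there both $I$ and $J$ are forced to be one-sided intervals, fast switching alone yields only three patterns, and one must combine the two limits so that the pure pattern supplied by the fast limit and the one supplied by the slow limit differ. Equivalently — and this is the route made rigorous by Theorem \ref{th:12} together with its species-swapped, reflected analogue for $\tilde{\Lambda}_x$ — I would choose the parameters so that the two threshold curves $v=v_y(u)$ and $v=v_x(u)$, expressed in a common coordinate through the triangular map $\xi$, cross transversally at an interior point; the four local quadrants around the crossing then realize the four regimes.

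I expect the crossing of these two threshold curves to be the main obstacle: it is exactly the compatibility of the required ordering of the fast thresholds (the roots of $a_s=c_s$ and of $b_s=d_s$) with the ordering of the slow thresholds, and since $\tilde{\Lambda}_x$ and $\tilde{\Lambda}_y$ live in different natural coordinates one must control both curves in a single frame via $\xi$. Here the explicit values of $\alpha$ and $\overline{\alpha}$ furnished by Theorem \ref{th:12} are what make the argument tractable: they pin down where each curve meets $v=0$ and $v=+\infty$, so one can force the finite branches of the two curves to overlap in $u$ and hence to intersect, and finally write down concrete positive parameters respecting the type inequalities in each of the two hard cases.
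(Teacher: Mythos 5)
Your reduction of the sixteen ordered pairs to the seven representatives $(1,1),(1,2),(1,3),(1,4),(3,3),(3,4),(4,4)$ via the environment-swap and species-swap symmetries is exactly the reduction implicit in the paper, whose table lists precisely these seven cases; your translation of ``four regimes'' into ``all four sign patterns of $(\tilde{\Lambda}_x,\tilde{\Lambda}_y)$ are attained'' is the correct starting point; and the fast- and slow-switching limits you invoke are the propositions of Section~2. Your identification of $(1,2)$ and $(3,4)$ as the cases where the fast limit alone yields only three patterns, and the idea of making the threshold curves $v_x$ and $v_y$ cross, coincide with how the paper organizes its verification (Theorem~\ref{th:12} for $v_y$ in the Type 1--2 case, plus Figures~\ref{fig:3-3} and~\ref{fig:contour}). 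So the strategy is essentially the paper's.

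The gap is that the theorem is an existence statement and you never produce the environments. For the five ``easy'' cases you assert that the endpoint signs \emph{allow} $I$ and $J$ (or their complements) to be interleaving interior intervals, but whether $I$ is nonempty at all is a genuinely delicate point: the remark in the introduction recalls that for two Type~1 environments $I$ may generically be empty, and the nonemptiness of $I$ in the $(1,1)$ case is the main discovery of \cite{BL}, not a free parameter choice. For the two hard cases you reduce the problem to a transversal crossing of $v_x$ and $v_y$ but defer both the analogue of Theorem~\ref{th:12} for $v_x$ and the final selection of parameters, which is where all the work lies (one must reconcile the ordering of the roots of $a_s=c_s$ and $b_s=d_s$ with the ordering of the slow-switching thresholds, under the positivity and type constraints). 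The paper's proof consists precisely of this missing step: an explicit table of seven parameter sets, justified by the analysis of Sections~2, 3 and~5 for the lines 1--1, 1--2 and 3--3 and by the numerically computed critical curves for the remaining lines. To close your argument you would need to exhibit concrete parameters in each case and verify the interleaving or crossing --- a verification the paper itself carries out only numerically for several of the cases.
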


The paper is organized as follows. In Section~2 we prove the properties of 
$\tilde{\Lambda}_x$ and $\tilde{\Lambda}_y$. In Section~3 we prove 
Theorem~\ref{th:12}. In Section 4 we present illustrations obtained by numerical simulation. 
In Section 5 we study the case when the two environments are of Type 3. Finally, in Section 6, 
we prove Theorem~\ref{th:99} providing, in each case, a good couple of environments.  
\section{Expression of invasion rates}
\begin{lem} If $\varepsilon_0$ and $\varepsilon_1$ are respectively of Type 1 
and Type 2, then $\tilde{I} $ is always nonempty and  there exists $0<\alpha<1$ 
(depends on $\alpha_i,\beta_i,a_i,c_i$) such that $\tilde{I} = (\alpha,1].$
\begin{proof}
Set
\[
R=\dfrac{\beta_0 \alpha_1}{\alpha_0 \beta_1},\quad u=\dfrac{s\alpha_1}{\alpha_s},
\quad A=(a_1-a_0)(R-1),\quad B=(2a_0-c_0-a_1)R + (c_1-a_0), \quad C=(c_0-a_0)R.
\]
For any $s\in (0,1)$, we get that 
\[
c_s - a_s = \dfrac{Au^2 + Bu +C}{R(1-u)+u}
\]
where $a_s$ and $c_s$ are given by \eqref{eq:alphas} and \eqref{eq:betas}.
Set 
\[
T(u) = Au^2 + Bu + C  \quad \forall u \in [0,1].
\]
We easily get 
\[
T(0)= C = (c_0-a_0)R > 0,\quad T(1)= A+B+C=c_1-a_1<0.
\]
Because $T$ is a second degree polynomial with $T(0)>0$ and $T(1)<0$, we conclude that
\[
T(u) < 0 \Leftrightarrow u>\alpha=\dfrac{-B-\sqrt{B^2-4AC}}{2A}.
\]
Therefore $u \in \tilde{I} \Leftrightarrow T(u)<0 \Leftrightarrow u>\alpha \Leftrightarrow u \in (\alpha,1]$. 
As a consequence, $\tilde{I} = (\alpha,1]$.
\end{proof}
\end{lem}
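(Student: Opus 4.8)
The plan is to reduce the condition defining $\tilde I$ to the sign of an explicit quadratic in the variable $u$, and then to read off the interval $(\alpha,1]$ from the behaviour of that quadratic at the endpoints $u=0$ and $u=1$, which is forced by the Type 1 / Type 2 hypotheses.

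First I would rewrite $a_s$ and $c_s$ in the $u$-coordinate. Since $u = s\alpha_1/\alpha_s$ and $1-u = (1-s)\alpha_0/\alpha_s$, formula \eqref{eq:alphas} collapses to the affine expression $a_s = a_0 + (a_1-a_0)u$. For $c_s$ I would eliminate $s$ through the relation $s/(1-s) = u\alpha_0/((1-u)\alpha_1)$ and substitute into \eqref{eq:betas}; after clearing denominators this turns $c_s$ into a ratio whose denominator, up to the positive constant $\alpha_0\beta_1$, is exactly $R(1-u)+u$ with $R = \beta_0\alpha_1/(\alpha_0\beta_1)$. Forming $c_s-a_s$ and collecting powers of $u$ then produces the claimed identity $c_s-a_s = T(u)/(R(1-u)+u)$ with $T(u)=Au^2+Bu+C$ and the stated $A,B,C$. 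This step is routine algebra; the only care needed is the bookkeeping of the common factor $\alpha_0\beta_1$, which cancels between numerator and denominator.

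Next comes the sign analysis. Because $R>0$, the denominator $R(1-u)+u$ is strictly positive on $[0,1]$, so $u\in\tilde I$ iff $T(u)<0$. I would then evaluate the endpoints: $T(0)=C=(c_0-a_0)R>0$, using the Type 1 inequality $a_0<c_0$, and $T(1)=A+B+C=c_1-a_1<0$, using the Type 2 inequality $a_1>c_1$. Hence $T$ changes sign on $(0,1)$ and $\tilde I$ is nonempty.

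The point deserving the most attention — and the one I would flag as the main obstacle — is arguing that the sign pattern is precisely "positive then negative", i.e. that $T(u)<0$ holds exactly on an upper interval $(\alpha,1]$ with no return to positivity before $u=1$. This must be checked uniformly in the sign of the leading coefficient $A$, which the hypotheses do not control. Since $T(0)T(1)<0$ the quadratic has a root in $(0,1)$, and the discriminant is therefore strictly positive; I would show that whether the parabola opens upward ($A>0$) or downward ($A<0$), the condition $T(1)<0$ forbids a second crossing inside $[0,1]$, so $T$ remains negative from its relevant root up to $1$. In the upward case this root is the smaller one and in the downward case the larger one, but in both cases it equals $\alpha=(-B-\sqrt{B^2-4AC})/(2A)$, so a single explicit formula covers both. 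Concluding, $\tilde I = \{u\in[0,1]:T(u)<0\} = (\alpha,1]$ with $0<\alpha<1$.
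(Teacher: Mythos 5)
Your proposal is correct and follows essentially the same route as the paper: pass to the $u$-coordinate, reduce $c_s-a_s$ to the quadratic $T(u)=Au^2+Bu+C$ over the positive denominator $R(1-u)+u$, and conclude from $T(0)>0$, $T(1)<0$. You are in fact slightly more careful than the paper in checking that the single formula $\alpha=(-B-\sqrt{B^2-4AC})/(2A)$ gives the correct root regardless of the sign of $A$, a point the paper passes over with ``because $T$ is a second degree polynomial.''
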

\begin{prop} The map $\tilde{\Lambda}_y (u,v)$ satisfies the following properties: \\[5pt]
For all $u \in [0,1]$
\[
\lim_{v\rightarrow \infty} \tilde{\Lambda}_y(u,v) =\beta_u(1-\dfrac{c_u}{a_u})
\begin{cases}
>0 & \text{if } u \in \tilde{I}=(\alpha,1], \\
=0 & \text{if } u \in \partial{\tilde{I}}= \lbrace \alpha \rbrace, \\
<0 & \text{if } u \in (0,1)\setminus \overline{\tilde{I}}=[0,\alpha),
\end{cases}
\]
and
\begin{equation} \label{eq: good}
\displaystyle \lim_{v \rightarrow 0} \tilde{\Lambda}_y(u,v) 
= \dfrac{1}{\dfrac{1}{\alpha_0}(1-u)+\dfrac{1}{\alpha_1}u}  
\Bigg(\Big(\dfrac{\beta_1}{\alpha_1}(1-\dfrac{c_1}{a_1})-
\dfrac{\beta_0}{\alpha_0}(1-\dfrac{c_0}{a_0})\Big)u + 
\dfrac{\beta_0}{\alpha_0}(1-\dfrac{c_0}{a_0})\Bigg).
\end{equation}
\end{prop}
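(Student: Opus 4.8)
The plan is to start from the integral representation of $\tilde{\Lambda}_y$ supplied by Lemma~\ref{lem:phi}. Although that lemma is stated for two environments of Type~1, its representation
\[
\tilde{\Lambda}_y(u,v)=\frac{1}{(a_1-a_0)\big(\tfrac{1}{\alpha_0}(1-u)+\tfrac{1}{\alpha_1}u\big)}\,\mathbb{E}[\phi(U_{u,v})],\qquad U_{u,v}\sim\mathrm{Beta}(uv,(1-u)v),
\]
is derived solely from the explicit invariant measure of the one-dimensional logistic switch, so it remains valid in the present Type~1/Type~2 setting, with $\phi$ defined through \eqref{eq:P} exactly as before (I take $a_0<a_1$ without loss of generality). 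Since $a_0+(a_1-a_0)y$ stays in $[a_0,a_1]\subset(0,\infty)$ for $y\in[0,1]$ and $P$ is a polynomial, $\phi$ is continuous and bounded on $[0,1]$. Both limits therefore reduce to identifying the weak limit of the law of $U_{u,v}$ and passing to the limit under this bounded continuous integrand.

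For the two weak limits I would use the method of moments, which is particularly clean for the Beta family: for every integer $k\ge 1$,
\[
\mathbb{E}[U_{u,v}^{\,k}]=\prod_{i=0}^{k-1}\frac{uv+i}{v+i}.
\]
Letting $v\to\infty$ gives $\mathbb{E}[U_{u,v}^{\,k}]\to u^{k}$, the moments of $\delta_u$, so $U_{u,v}\Rightarrow\delta_u$; letting $v\to 0$ gives $\mathbb{E}[U_{u,v}^{\,k}]\to u$ for every $k\ge 1$, which are exactly the moments of a $\mathrm{Bernoulli}(u)$ variable, so $U_{u,v}\Rightarrow u\,\delta_1+(1-u)\,\delta_0$. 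As all laws are supported on the compact $[0,1]$, convergence of moments yields weak convergence, and boundedness plus continuity of $\phi$ give
\[
\lim_{v\to\infty}\mathbb{E}[\phi(U_{u,v})]=\phi(u),\qquad \lim_{v\to 0}\mathbb{E}[\phi(U_{u,v})]=u\,\phi(1)+(1-u)\,\phi(0).
\]

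It then remains to evaluate these two quantities. For $v\to\infty$, a short computation with \eqref{eq:alphas} gives the identity $a_0+(a_1-a_0)u=a_u$, whence $\phi(u)=a_u\,P(1/a_u)$; simplifying with \eqref{eq:P}, \eqref{eq:betas} and dividing by the prefactor produces $\beta_u(1-c_u/a_u)$. The same value can be read off from the fast-switching average, where the $X$-marginal concentrates at $1/a_u$ while the index occupies states $0$ and $1$ with the stationary proportions $1-s$ and $s$. Its sign is that of $a_u-c_u$, and by the preceding lemma $a_u>c_u\iff u\in\tilde{I}=(\alpha,1]$, with equality at $u=\alpha$; this is precisely the announced trichotomy. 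For $v\to 0$, I evaluate $\phi$ at the endpoints: in $P(1/a_0)$ the first bracket of \eqref{eq:P} carries the factor $(1-a_0/a_0)=0$, while in $P(1/a_1)$ the factor $(1-a_1/a_1)=0$ kills the second bracket. This leaves $\phi(0)=\tfrac{\beta_0}{\alpha_0}(1-c_0/a_0)(a_1-a_0)$ and $\phi(1)=\tfrac{\beta_1}{\alpha_1}(1-c_1/a_1)(a_1-a_0)$, so that in $u\,\phi(1)+(1-u)\,\phi(0)$ divided by the prefactor the factor $(a_1-a_0)$ cancels and \eqref{eq: good} drops out.

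The only genuinely delicate point is the $v\to 0$ regime: the Beta density degenerates there, both exponents tending to $-1$, so the mass escapes to the two endpoints of $[0,1]$, and it is exactly this escape that must be controlled. The method-of-moments argument above handles it rigorously; the sole remaining verification is that limit and expectation may be interchanged, which is licensed by the boundedness and continuity of $\phi$ on the compact interval. Everything else is elementary algebra, arranged so that the vanishing endpoint factors $(1-a_0/a_0)$ and $(1-a_1/a_1)$, together with the identity $a_0+(a_1-a_0)u=a_u$ at the mean, carry out the simplifications.
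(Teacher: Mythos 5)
Your proof is correct, but it takes a genuinely different route from the paper. The paper disposes of this proposition in one line, by invoking Proposition~2.3 of Benaïm--Lobry (which gives the $t\to 0$ and $t\to\infty$ limits of the invasion rates in the original $(s,t)$ parametrization) and transporting the result through the triangular change of variables $(u,v)=\xi(s,t)$; the identification of the signs then comes from the preceding lemma on $\tilde I$. You instead work directly from the Beta representation of Lemma~\ref{lem:phi}: the moment computation $\mathbb{E}[U_{u,v}^k]=\prod_{i=0}^{k-1}\frac{uv+i}{v+i}$ cleanly yields $U_{u,v}\Rightarrow\delta_u$ as $v\to\infty$ and $U_{u,v}\Rightarrow u\delta_1+(1-u)\delta_0$ as $v\to 0$ (moment determinacy on the compact $[0,1]$ legitimizes the weak convergence), and your endpoint and midpoint evaluations of $\phi$ check out --- in particular the algebra $a_0+(a_1-a_0)u=a_s$, $\tfrac{1-u}{\alpha_0}+\tfrac{u}{\alpha_1}=1/\alpha_s$, $\tfrac{\beta_1}{\alpha_1}u=\tfrac{s\beta_1}{\alpha_s}$ does reproduce $\beta_s(1-c_s/a_s)$ and \eqref{eq: good}. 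Your approach buys a self-contained argument that makes the probabilistic mechanism transparent (fast switching averages the environments at the Dirac mass $\delta_u$; slow switching splits the mass between the two pure environments in proportion $u:1-u$), at the modest cost of having to justify that the representation of Lemma~\ref{lem:phi}, stated for two Type~1 environments, persists in the Type~1/Type~2 setting --- a point you address correctly, and which the paper itself relies on in Section~3, since the representation only uses the invariant law of the one-dimensional logistic switch and the standing assumption $a_0\neq a_1$.
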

\begin{proof}
The proposition is obtained by changing variables $(s,t) \longleftrightarrow (u,v)$ 
from \cite[Prop. 2.3]{BL}.
\end{proof}
\begin{prop}There exists $0<\overline{\alpha}< 1$ such that 
$\displaystyle \lim_{v\rightarrow 0} \tilde{\Lambda}_y(u,v) > 0$ if $u > \overline{\alpha}$ 
and $\displaystyle \lim_{v \rightarrow 0}\tilde{\Lambda}_y(u,v) < 0$ if $u< \overline{\alpha}$.  
\end{prop}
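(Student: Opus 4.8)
The plan is to extract the sign directly from the explicit formula for $\displaystyle\lim_{v\to 0}\tilde{\Lambda}_y(u,v)$ established in the preceding proposition, namely \eqref{eq: good}. This expression is a product of two factors. The first is the prefactor $\big(\frac{1}{\alpha_0}(1-u)+\frac{1}{\alpha_1}u\big)^{-1}$, which is strictly positive for every $u\in[0,1]$ because $\alpha_0,\alpha_1>0$. The second is the affine function
\[
g(u)=\Big(\frac{\beta_1}{\alpha_1}\big(1-\frac{c_1}{a_1}\big)-\frac{\beta_0}{\alpha_0}\big(1-\frac{c_0}{a_0}\big)\Big)u+\frac{\beta_0}{\alpha_0}\big(1-\frac{c_0}{a_0}\big).
\]
Since the prefactor never vanishes and never changes sign, the sign of the limit coincides with the sign of $g(u)$, and the whole statement reduces to locating the zero of the line $g$.

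Next I would evaluate $g$ at the endpoints and use the type hypotheses to fix the signs. At $u=0$ one has $g(0)=\frac{\beta_0}{\alpha_0}(1-\frac{c_0}{a_0})$; since $\varepsilon_0$ is of Type~1 we have $a_0<c_0$, so $1-c_0/a_0<0$ and hence $g(0)<0$. At $u=1$ one has $g(1)=\frac{\beta_1}{\alpha_1}(1-\frac{c_1}{a_1})$; since $\varepsilon_1$ is of Type~2 we have $a_1>c_1$, so $1-c_1/a_1>0$ and hence $g(1)>0$.

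It then remains to conclude by elementary considerations on a line. Because $g$ is affine with $g(0)<0<g(1)$, its slope $g(1)-g(0)$ is strictly positive, so $g$ is strictly increasing and admits a single root $\overline{\alpha}=-g(0)/\big(g(1)-g(0)\big)$, which lies in $(0,1)$ precisely because $-g(0)>0$ and $g(1)>0$. One has $g(u)<0$ for $u<\overline{\alpha}$ and $g(u)>0$ for $u>\overline{\alpha}$; multiplying by the positive prefactor yields the claimed dichotomy. I do not anticipate a genuine obstacle: the only point demanding care is to apply the sign conventions of Type~1 (for $\varepsilon_0$) and Type~2 (for $\varepsilon_1$) correctly at the two endpoints, after which the result is just the intermediate value theorem for a straight line.
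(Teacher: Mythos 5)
Your argument is correct and is essentially the paper's own proof: both reduce the sign of the limit in \eqref{eq: good} to the sign of the affine function $g$, check $g(0)<0$ (Type 1) and $g(1)>0$ (Type 2), and take $\overline{\alpha}$ to be the unique zero of $g$ in $(0,1)$. You merely spell out the endpoint sign computations and the monotonicity of $g$ slightly more explicitly than the paper does.
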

\begin{proof} 
The limit in \eqref{eq: good} has the same sign than  
\[
g(u)=\Big(\dfrac{\beta_1}{\alpha_1}(1-\dfrac{c_1}{a_1})
-\dfrac{\beta_0}{\alpha_0}(1-\dfrac{c_0}{a_0})\Big)u 
+ \dfrac{\beta_0}{\alpha_0}(1-\dfrac{c_0}{a_0}), 
\quad \forall u \in [0,1].
\]
We get 
\[
g(0) = \dfrac{\beta_0}{\alpha_0}(1-\dfrac{c_0}{a_0 }) < 0
\quad \text{and}\quad 
g(1)= \dfrac{\beta_1}{\alpha_1}(1-\dfrac{c_1}{a_1}) > 0. 
\]
Since $g$ is a linear function, $\overline{\alpha}$ is the unique zero of $g$ 
and the result is clear. 
\end{proof}

\begin{prop}
Let $a$ be the coefficient of degree 2 of polynomial $P$ given by \eqref{eq:P}  
\[
a=\Big(\dfrac{\beta_1}{\alpha_1}c_1a_0-
\dfrac{\beta_0}{\alpha_0}c_0a_1\Big)\dfrac{a_1-a_0}{|a_1-a_0|}.
\]
If $a<0$ (resp. $a>0$ or $a=0$) then 
$\alpha < \overline{\alpha}$ (resp. $\alpha > \overline{\alpha}$ or $\alpha = \overline{\alpha}$).
\end{prop}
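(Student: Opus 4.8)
The plan is to reduce the whole statement to a one-line convexity comparison for the single-variable function $\phi$ of Lemma~\ref{lem:phi}. The point is that $\alpha$ and $\overline{\alpha}$ are the zeros of, respectively, the $v \to \infty$ limit and the $v \to 0$ limit of $\tilde{\Lambda}_y$ computed in the two preceding propositions, and that both limits are built from the same $\phi$: the first is a positive multiple of the value $\phi(u)$, while the second is a positive multiple of the value at $u$ of the chord of $\phi$ joining its two endpoints. Once this is seen, the sign of $a$ — which governs the concavity of $\phi$ — decides on which side of the chord's root the root of $\phi$ lies.

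First I would make $\phi$ explicit. Writing $P(x) = a x^2 + p_1 x + p_0$ and $a_y = a_0 + (a_1-a_0)y$, a direct expansion of $\phi(y) = a_y P(1/a_y)$ gives
\[
\phi(y) = p_0\, a_y + p_1 + \frac{a}{a_y}.
\]
Since $y \mapsto a_y$ is an increasing affine bijection of $[0,1]$ onto $[a_0,a_1] \subset (0,\infty)$, and since $\frac{d^2}{dw^2}\big(p_0 w + p_1 + a/w\big) = 2a/w^3$ has the sign of $a$ for $w > 0$, the function $\phi$ is strictly concave, affine, or strictly convex on $[0,1]$ according as $a < 0$, $a = 0$, or $a > 0$. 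This is the structural fact that drives everything.

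Next I would identify the two roots. For $\alpha$: a short computation shows $\phi(u) = (a_1-a_0)\big(\tfrac{1-u}{\alpha_0} + \tfrac{u}{\alpha_1}\big)\,\beta_u(1 - c_u/a_u)$, a positive multiple of the $v \to \infty$ limit, so by the first proposition $\phi$ is negative on $[0,\alpha)$, vanishes at $\alpha$, and is positive on $(\alpha,1]$; in particular $\alpha$ is its unique zero. For $\overline{\alpha}$: evaluating the explicit form at the endpoints gives $\phi(0) = (a_1-a_0)\tfrac{\beta_0}{\alpha_0}(1 - c_0/a_0) < 0$ and $\phi(1) = (a_1-a_0)\tfrac{\beta_1}{\alpha_1}(1 - c_1/a_1) > 0$ (here $\varepsilon_0$ is of Type~1 and $\varepsilon_1$ of Type~2), whence the chord $L(u) := (1-u)\phi(0) + u\phi(1)$ equals $(a_1-a_0)\,g(u)$; by the second proposition its unique zero in $(0,1)$ is $\overline{\alpha}$.

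Finally I would compare $\phi$ with its chord $L$ on $(0,1)$. If $a = 0$ then $\phi = L$ and the roots coincide, so $\alpha = \overline{\alpha}$. If $a < 0$ then $\phi > L$ on $(0,1)$, hence $\phi(\overline{\alpha}) > L(\overline{\alpha}) = 0$; as $\phi \le 0$ on $[0,\alpha]$ this forces $\overline{\alpha} > \alpha$. If $a > 0$ then $\phi < L$ on $(0,1)$, hence $\phi(\overline{\alpha}) < 0$, and as $\phi \ge 0$ on $[\alpha,1]$ this forces $\overline{\alpha} < \alpha$. The only genuine work is the bookkeeping of the first two steps — the expansion of $\phi$ and the verification that the two limits are positive multiples of $\phi(u)$ and of the chord value; I expect the verification that the $v \to 0$ regime produces exactly the chord (the Beta$(uv,(1-u)v)$ law concentrating on $\{0,1\}$ with masses $1-u$ and $u$ as $v \to 0$, versus concentrating at the mean $u$ as $v \to \infty$) to be the conceptual crux, after which the comparison is immediate.
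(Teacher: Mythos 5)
Your proof is correct, but it follows a genuinely different route from the paper's. The paper argues computationally: it characterizes $\alpha$ as the root of the quadratic $T(u)=Au^2+Bu+C$ (with $T>0$ before $\alpha$ and $T<0$ after), solves the linear equation defining $\overline{\alpha}$, substitutes to obtain a closed-form factorization of $T(\overline{\alpha})$, and reads off its sign from the Type~1/Type~2 inequalities together with the sign of $a$. You instead exploit the structure of $\phi$: the expansion $\phi(y)=p_0a_y+p_1+a/a_y$ shows that the sign of $a$ is exactly the sign of $\phi''$, the $v\to\infty$ and $v\to 0$ limits of $\tilde{\Lambda}_y$ identify $\alpha$ as the unique zero of $\phi$ and $\overline{\alpha}$ as the unique zero of its chord $L(u)=(1-u)\phi(0)+u\phi(1)$ (since the Beta$(uv,(1-u)v)$ law concentrates at $u$, resp.\ at $\{0,1\}$ with weights $1-u,u$), and the conclusion is the elementary fact that a strictly concave function lies above its chord. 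Your approach is more conceptual, treats the three cases uniformly, and explains \emph{why} the sign of $a$ is the relevant quantity, whereas the paper's computation yields an explicit formula for $T(\overline{\alpha})$ but little insight; the identification of the two limits with $\phi(u)$ and $L(u)$ that you flag as the crux is indeed already available in the paper (Lemma~\ref{lem:phi} and the two preceding propositions), so your argument closes. Two small points to make explicit: you should state the normalization $a_0<a_1$ up front (Lemma~\ref{lem:phi} is written under that convention, and your endpoint formulas $\phi(0)=(a_1-a_0)\tfrac{\beta_0}{\alpha_0}(1-c_0/a_0)$, $\phi(1)=(a_1-a_0)\tfrac{\beta_1}{\alpha_1}(1-c_1/a_1)$ use it; with the sign factor in $P$ the general formulas carry $|a_1-a_0|$), and you should note, as the paper does in Section~3, that the representation of $\tilde{\Lambda}_y$ via $\phi$ extends from the Type~1--1 setting of Lemma~\ref{lem:phi} to the Type~1--2 setting, since it only depends on the one-dimensional switched logistic dynamics of $X$.
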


\begin{proof}
By symmetry we only consider the case $a<0$. 
Without loss of generality, we assume that $a_1 >a_0$ and $a$ becomes:
\[
a=\dfrac{\beta_1}{\alpha_1}c_1a_0-\dfrac{\beta_0}{\alpha_0}c_0a_1.
\]
To prove that $\alpha < \overline{\alpha}$, it is sufficient to prove 
$A\overline{\alpha}^2 +B\overline{\alpha} + C < 0$.
Since, by definition of $\overline{\alpha}$, 
\[
\Bigg(\dfrac{\beta_1}{\alpha_1}\Big(1-\dfrac{c_1}{a_1}\Big)-
\dfrac{\beta_0}{\alpha_0}\Big(1-\dfrac{c_0}{a_0}\Big)\Bigg)
\overline{\alpha}+\dfrac{\beta_0}{\alpha_0}\Big(1-\dfrac{c_0}{a_0}\Big) =0,
\]
we get, multiplying by $a_0a_1\alpha_1/\beta_1$, that
\begin{equation}\label{eq:alphab}
(a_0a_1-c_1a_0-Ra_1a_0+Ra_1c_0)\overline{\alpha} + Ra_1(a_0-c_0)=0. 
\end{equation}
Replacing $\overline{\alpha}$ by its expression in~\eqref{eq:alphab}, we get:
\[
 A\overline{\alpha}^2 +B\overline{\alpha} + C = 
 \dfrac{R(a_1-c_1)(a_0-a_1)(a_0-c_0)(a_0c_1-Ra_1c_0)}{(a_0a_1-a_0c_1-Ra_0a_1+Ra_1c_0)^2}.
\]
Since $c_0>a_0,a_1>c_1,a_1>a_0$ and $a_0c_1-Ra_1c_0=\dfrac{\alpha_1}{\beta_1}a <0$,  
we conclude $A\overline{\alpha}^2 + B\overline{\alpha}+C <0$.
\end{proof}

\section{Shape of the positivity region}

Recall $a=\Big(\dfrac{\beta_1}{\alpha_1}a_0c_1 -\dfrac{\beta_0}{\alpha_0}a_1c_0\Big)
\dfrac{a_1-a_0}{|a_1 - a_0|}$ is the coefficient of degree 2 of polynomial $P$ given 
by~\eqref{eq:P}.

\begin{lem}\cite[Lemma 4.1]{MZ}
Assume $\varepsilon_0$ and $\varepsilon_1$ are of Type 1. If $\tilde{I}$ is nonempty, then the map 
$(u,v) \rightarrow \mathbb{E}[\phi(U_{u,v})]$ is increasing in $v$ and concave in $u$.
\end{lem}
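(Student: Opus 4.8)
The plan is to write $\mathbb{E}[\phi(U_{u,v})]=\int_0^1\phi(x)\rho_{u,v}(x)\,dx$, where $\rho_{u,v}$ is the density of $\mathrm{Beta}(uv,(1-u)v)$, and to differentiate under the integral sign in the parameters. Since the dependence of $\rho_{u,v}$ on $u$ and $v$ enters only through the log-density, each derivative of $\mathbb{E}[\phi(U_{u,v})]$ will be a covariance of $\phi(U_{u,v})$ against an explicit score function, and the two assertions become sign statements about such covariances. To handle an arbitrary concave $\phi$ I would reduce to the generating family of stop-loss functions $x\mapsto(x-c)_+$: the hypothesis that $\tilde I$ is nonempty gives, via Lemma~\ref{lem:phi}, that $\phi$ is concave, hence $\phi=(\text{affine})-\int_0^1(x-c)_+\,\nu(dc)$ with $\nu=-\phi''\geq 0$.

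For the monotonicity in $v$, a short computation gives $\partial_v\log\rho_{u,v}(x)=\Psi(x)-\bar\Psi$ with $\Psi(x)=u\log x+(1-u)\log(1-x)$ and $\bar\Psi:=\mathbb{E}[\Psi(U_{u,v})]$, hence $\partial_v\mathbb{E}[\phi(U_{u,v})]=\mathrm{Cov}(\phi(U_{u,v}),\Psi(U_{u,v}))$. Using Hoeffding's identity $\mathrm{Cov}(f(U),\Psi(U))=\int_0^1 f'(s)\,C_\Psi(s)\,ds$ with $C_\Psi(s)=\int_s^1(\Psi(x)-\bar\Psi)\rho_{u,v}(x)\,dx$, the affine part of $\phi$ contributes nothing because $\mathrm{Cov}(U_{u,v},\Psi(U_{u,v}))=\partial_v u=0$, and it remains to show that $H_\Psi(c):=\int_c^1 C_\Psi(s)\,ds=\partial_v\mathbb{E}[(U_{u,v}-c)_+]\le 0$ for every $c$. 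This I would obtain from the shape of $\Psi$: since $\Psi'(x)=(u-x)/(x(1-x))$, the function $\Psi$ is unimodal, so $\Psi-\bar\Psi$ changes sign twice, $C_\Psi$ is increasing-decreasing-increasing with $C_\Psi(0)=C_\Psi(1)=0$, and therefore changes sign exactly once from $+$ to $-$; together with $H_\Psi(0)=H_\Psi(1)=0$ this forces $H_\Psi\le 0$.

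For the concavity in $u$ the same mechanism gives $\partial_u\log\rho_{u,v}(x)=v(\chi(x)-\bar\chi)$ with $\chi(x)=\log\frac{x}{1-x}$ and $\bar\chi:=\mathbb{E}[\chi(U_{u,v})]$, whence $\partial_u\mathbb{E}[\phi(U_{u,v})]=v\,\mathrm{Cov}(\phi(U_{u,v}),\chi(U_{u,v}))$; differentiating once more and using $\partial_u\bar\chi=v\,\mathrm{Var}(\chi(U_{u,v}))$ yields the clean identity $\partial_u^2\mathbb{E}[\phi(U_{u,v})]=v^2\,\mathrm{Cov}\big(\phi(U_{u,v}),(\chi(U_{u,v})-\bar\chi)^2\big)$. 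I would again reduce to stop-loss functions: the affine part drops out because $\mathrm{Cov}(U_{u,v},(\chi-\bar\chi)^2)=\partial_u^2 u=0$, so concavity in $u$ amounts to $H_g(c):=\int_c^1 C_g(s)\,ds\ge 0$ for every $c$, where $g=(\chi-\bar\chi)^2$ and $C_g(s)=\int_s^1(g(x)-\bar g)\rho_{u,v}(x)\,dx$, $\bar g:=\mathbb{E}[g(U_{u,v})]$. Since $\chi$ is increasing, $g$ is U-shaped, so $g-\bar g$ changes sign in the pattern $+,-,+$, $C_g$ is decreasing-increasing-decreasing with $C_g(0)=C_g(1)=0$ and hence changes sign exactly once from $-$ to $+$, and $H_g(0)=H_g(1)=0$ forces $H_g\ge 0$.

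The routine parts (justifying differentiation under the integral, finiteness of the logarithmic moments of the Beta law, and the integral representation of a concave function) are standard. The real content, and the step I expect to be the main obstacle, is the sign/shape analysis of the cumulative functions $C_\Psi$ and $C_g$: one must argue that a function vanishing at both endpoints, whose derivative inherits the sign pattern of a unimodal (resp. U-shaped) centred integrand, has exactly one sign change, and then combine this with the endpoint identities $H(0)=H(1)=0$ — themselves consequences of the fact that $\mathbb{E}[U_{u,v}]=u$ is affine in $u$ and constant in $v$ — to pin down the sign of $H$. Once this is in place, the concave decomposition of $\phi$ immediately delivers both $\partial_v\mathbb{E}[\phi(U_{u,v})]\ge 0$ and $\partial_u^2\mathbb{E}[\phi(U_{u,v})]\le 0$.
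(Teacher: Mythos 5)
Your proposal is correct, but there is an important mismatch of expectations: the paper does not prove this lemma at all --- it is quoted from \cite[Lemma 4.1]{MZ} and used as a black box (the subsequent lemma on Type 1--2 environments is then declared a ``straightforward adaptation''). So there is no internal proof to compare against; what you have written is a self-contained substitute, and it checks out. The two identities $\partial_v\mathbb{E}[\phi(U_{u,v})]=\mathrm{Cov}\big(\phi(U_{u,v}),\Psi(U_{u,v})\big)$ and $\partial_u^2\mathbb{E}[\phi(U_{u,v})]=v^2\,\mathrm{Cov}\big(\phi(U_{u,v}),(\chi(U_{u,v})-\bar\chi)^2\big)$ are exact (in the second one the cross term is absorbed precisely by $\partial_u\bar\chi=v\,\mathrm{Var}(\chi(U_{u,v}))$), and the reduction to stop-loss functions is legitimate because the $\phi$ of Lemma~\ref{lem:phi} is smooth on $[0,1]$, so $\nu=-\phi''(c)\,dc$ is a finite nonnegative measure when $\phi$ is concave. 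The sign analysis, which you rightly identify as the real content, is sound: $\Psi$ is strictly concave and tends to $-\infty$ at both endpoints, so $\Psi-\bar\Psi$ has sign pattern $-,+,-$, hence $C_\Psi$ (vanishing at $0$ and $1$, increasing--decreasing--increasing) is positive then negative, and $H_\Psi$, decreasing then increasing between two zero boundary values, is nonpositive; the mirror argument for the U-shaped $g=(\chi-\bar\chi)^2$ gives $H_g\ge 0$. The endpoint identities $H(0)=H(1)=0$ do follow from $\mathbb{E}[U_{u,v}]=u$. In substance you have shown that $v\mapsto U_{u,v}$ increases for the concave (mean-preserving-spread) order and that $u\mapsto\mathbb{E}[(U_{u,v}-c)_+]$ is convex, which is a clean and reusable formulation. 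Two minor points worth tightening: as written you only obtain that $\mathbb{E}[\phi(U_{u,v})]$ is \emph{nondecreasing} in $v$; strictness requires noting that $H_\Psi<0$ on $(0,1)$ and that $\phi''$ is not identically zero (for two Type~1 environments with $\tilde I\neq\emptyset$ the coefficient $a$ of $P$ is negative, so $\phi$ is strictly concave). And the differentiation under the integral sign should be restricted to $(u,v)\in(0,1)\times(0,\infty)$, where the logarithmic moments of the Beta law are finite.
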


\begin{rem} In Benaïm and Lobry's case, if $I$ is nonempty, $\phi$ is concave and the parameter 
$a$ is always negative. In the present case, $a$ may be negative, positive or zero. Therefore, 
we have the following lemma.
\end{rem}
\begin{lem} Assume $\varepsilon_0$ and $\varepsilon_1$ are respectively of Type 1 and Type 2, 
then the shape of $\phi$ depends on the sign of $a$:
\begin{itemize}
\item If $a$ is negative, then $\phi$ is strongly concave and 
$(u,v)\rightarrow \mathbb{E}[\phi(U_{u,v})]$ is increasing in $v$ and concave in $u$.
\item If $a$ is positive, then $\phi$ is strongly convex and 
$(u,v)\rightarrow \mathbb{E}[\phi(U_{u,v})]$ is decreasing in $v$ and convex in $u$.
\item If $a$ is zero, then $\phi $ is linear and 
$(u,v)\rightarrow \mathbb{E}[\phi(U_{u_,v})]$ is constant in $v$ and linear in $u$.
\end{itemize}
\end{lem}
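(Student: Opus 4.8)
The plan is to split the statement into two independent parts: the pointwise shape of $\phi$, which is a one-variable computation, and the behaviour of the averaged map $(u,v)\mapsto\mathbb{E}[\phi(U_{u,v})]$, which I would deduce from~\cite[Lemma 4.1]{MZ} applied to $\pm\phi$. For the shape, set $w=a_0+(a_1-a_0)y$; as $y$ ranges over $[0,1]$ the quantity $w$ stays in the interval with endpoints $a_0,a_1$, so in particular $w>0$. Writing $P(x)=ax^2+px+q$ with $a$ the leading coefficient in~\eqref{eq:P}, the definition of $\phi$ gives the explicit form
\[
\phi(y)=w\,P(1/w)=\frac{a}{w}+p+qw,
\]
and differentiating twice in $y$ with $dw/dy=a_1-a_0$ yields $\phi''(y)=2a(a_1-a_0)^2/w^3$. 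Since $w>0$ and $(a_1-a_0)^2>0$ on $[0,1]$, the sign of $\phi''$ is exactly that of $a$ and $|\phi''|$ is bounded away from $0$; hence $\phi$ is strongly concave if $a<0$, strongly convex if $a>0$, and affine if $a=0$.

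For the averaged map, the key observation is that the content of~\cite[Lemma 4.1]{MZ}, namely that concavity of $\phi$ on $[0,1]$ implies that $(u,v)\mapsto\mathbb{E}[\phi(U_{u,v})]$ is increasing in $v$ and concave in $u$, uses only the concavity of $\phi$ together with the explicit $\mathrm{Beta}(uv,(1-u)v)$ density; the Type~1/Type~1 hypothesis there serves only to guarantee, through nonemptiness of $\tilde I$, that $\phi$ is concave. In our Type~1/Type~2 setting that concavity is supplied directly by the previous paragraph. Thus, when $a<0$ the lemma applies to $\phi$ and gives increasing in $v$ and concave in $u$. When $a>0$, $\phi$ is convex, so $-\phi$ is concave; applying the lemma to $-\phi$ shows that $\mathbb{E}[-\phi(U_{u,v})]$ is increasing in $v$ and concave in $u$, whence $\mathbb{E}[\phi(U_{u,v})]$ is decreasing in $v$ and convex in $u$. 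When $a=0$, $\phi$ is affine and, since $\mathbb{E}[U_{u,v}]=u$ for every $v$, we have $\mathbb{E}[\phi(U_{u,v})]=\phi(u)$, which is constant in $v$ and linear in $u$.

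The step I expect to be the main obstacle is the concavity in $u$: the two Beta parameters $uv$ and $(1-u)v$ both vary with $u$, so the claim is not a direct consequence of Jensen's inequality. The honest route is to differentiate $u\mapsto\mathbb{E}[\phi(U_{u,v})]$ twice under the integral sign; after an integration by parts against the Beta density the second derivative becomes an average of $\phi''$ against a positive weight, so its sign is governed by the sign of $a$ established above. This computation, which produces digamma terms, is precisely what~\cite[Lemma 4.1]{MZ} already carries out, so invoking that lemma for $\phi$ and for $-\phi$ lets us avoid reproducing it here.
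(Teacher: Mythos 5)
Your proposal is correct and follows essentially the same route as the paper, which simply states that the lemma is a straightforward adaptation of \cite[Lemma 4.1]{MZ}; your explicit computation $\phi(y)=a/w+p+qw$ with $\phi''(y)=2a(a_1-a_0)^2/w^3$, followed by applying that lemma to $\phi$ or $-\phi$ and treating the affine case via $\mathbb{E}[U_{u,v}]=u$, is precisely the adaptation the authors have in mind. The only added value on your side is that you make the sign analysis of $\phi''$ and the reduction to $\pm\phi$ explicit, which the paper leaves to the reader.
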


\begin{proof}
This is a straightforward adaptation of \cite[Lem 4.1]{MZ}.
\end{proof}

Let us conclude this section with the proof of Theorem~\ref{th:12}.

\begin{proof}[Proof of Theorem \ref{th:12}]
We consider only the case $a<0$. Set $K=(\alpha,\overline{\alpha})$. We know clearly 
that $v \rightarrow \tilde{\Lambda}_y(u,v)$ admits:
\begin{itemize}
\item negative limits at 0 and $\infty$ if $u \in [0,\alpha)$,
\item positive limits at 0 and $\infty$ if $u \in (\overline{\alpha},1]$,
\item a negative limit at 0 and a positive limit at $\infty$ if $u \in (\alpha,\overline{\alpha})$.
\end{itemize}
The fact that $v\mapsto \tilde{\Lambda}_y(u,v)$ is increasing justifies the existence 
of $v_y$, and we have
\[
\tilde{\Lambda}_y(u,v) = 0 \Leftrightarrow u \in K, v=v_y(u).
\]
Let us prove that $v_y$ is decreasing in $K$. Let $\delta_1<\delta_2$ be two points 
in $K$. Choose any $\delta_3 \in (\overline{\alpha},1)$, we get 
$\tilde{\Lambda}_y(\delta_1,v_y(\delta_1))=0$ and 
$\tilde{\Lambda}_y(\delta_3,v_y(\delta_1))>0$. Since $\tilde{\Lambda}_x(\cdot,v_y(\delta_1))$ 
is concave and $\delta_1<\delta_2<\delta_3$ we get 
$\tilde{\Lambda}_y(\delta_2,v_y(\delta_1))>0$. Since $\tilde{\Lambda}_y(\delta_2,\cdot)$ 
is increasing, we obtain $v_y(\delta_2) < v_y(\delta_1)$. 

The continuity of $v_y$ on $K$ is a straightforward consequence of the continuity of 
the function $\tilde \Lambda_y$, which is obvious from the expression~\eqref{lem:phi}. 

Let us show $v_y $ tends to $\infty$ on $\alpha$. Let 
$\lbrace u_n \rbrace \subset K: u_n \downarrow \alpha$. Since $v_y$ 
is decreasing in $K$, we get $v_y(u_n) \uparrow v \in [0,\infty]$. If $v$ 
is finite, since the zero set of $\tilde{\Lambda}_y$ is closed, by continuity, 
$\alpha \in K$ (impossible). So $v_y(u_n)\uparrow \infty$. 

Let us prove $v_y$ tends to 0 on $\overline{\alpha}.$ Let 
$\lbrace u_n \rbrace \subset K : u_n \uparrow \overline{\alpha}$. Since $v_y$ 
is decreasing in $K$, we get $v_y(u_n) \downarrow \epsilon \in [0,\infty)$. If 
$\epsilon >0$, since $u_n < \overline{\alpha}$, we obtain 
$\tilde{\Lambda}_y(u_n,\epsilon/2)<0\; \forall n$. Therefore 
$0<\tilde{\Lambda}_y(\overline{\alpha},\epsilon/2)  
=\displaystyle \lim_{n \rightarrow \infty} \tilde{\Lambda}_y(u_n,\epsilon/2) \leq 0$ 
(impossible). As a consequence, $\epsilon =0 $ and $v_y(u_n) \downarrow 0$.
\end{proof}

\section{Numerical illustrations}
 Recall that for all $u \in [0,1],\; v_y(u) \; \text{and}  \; \; v_x(u)$ are 
 the unique respective solutions of 
 \[
\tilde{\Lambda}_y(u,v) = 0 \quad \text{and} \quad \tilde{\Lambda}_x(u,v)= 0.
\]
We now consider, for a varying parameter $\rho$, the environments 
\begin{equation}\label{eq:env-rho}
 \varepsilon_0 =(1,5,2,8,3,3) \quad \text{and} \quad \varepsilon_1 =(2,11,1,\rho,2,1.8).
\end{equation}
\begin{figure}[!ht]
\begin{center}
\includegraphics[scale=0.7]{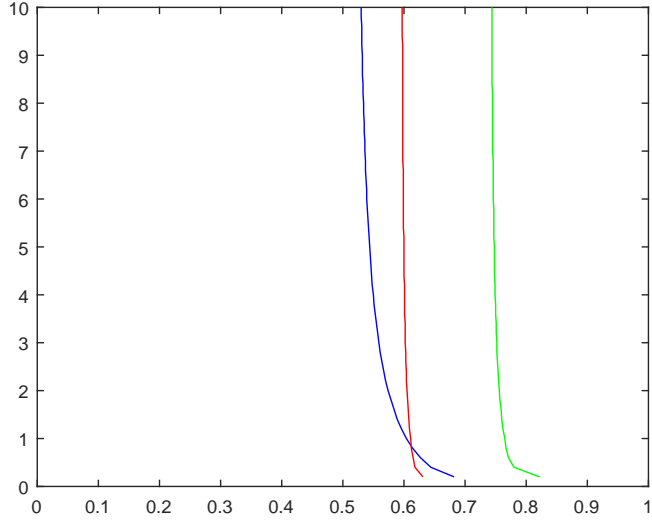}
\caption{The blue curve is the graph of $v_y$ (it does not depend on $\rho$); 
the green and red curves are $v_x$ for the environments given in~\eqref{eq:env-rho} 
with $\rho=10$ and $\rho=9$ respectively.}
\label{fig:critical}
\end{center}
\end{figure}
Figure $\ref{fig:critical}$ represents the "critical" functions $v_y$ and $v_x$ 
for different choices of the environments. Thanks to \cite{BL}, these 
plots give us information about how many regimes we can observe when the jump 
rates are modified. For example, the plot for $\rho = 10$ has three regimes: 
extinction of $x$ (on the right of the green curve), persistence (between the green and blue curves) 
and extinction of $y$ (on the left of the blue curve). For $\rho=9$, there is an additional zone
(above the red curve and below the blue curve) that corresponds to jump rates leading to 
random extinction of a species.

\section{Switching between two persistent Lotka-Volterra systems}

Let us assume that $\varepsilon_0$ and  $\varepsilon_1$ are of Type 3. In this case, one can 
easily get that extinction of species $y$ is not possible if $u$ is to close of $0$ or $1$; in other
words, $[0,1]\setminus \tilde{I}$ is either empty or is an open interval which closure 
is contained in $[0,1]$. Recall
\[
R=\dfrac{\beta_0 \alpha_1}{\alpha_0 \beta_1},\quad A=(a_1-a_0)(R-1),\quad B=(2a_0-c_0-a_1)R + (c_1-a_0), \quad C=(c_0-a_0)R.
\]
Then, we get that 
\begin{align*}
[0,1]\setminus \tilde{I} \neq \emptyset \Leftrightarrow \begin{cases}
A < 0 \\
\Delta = B^2 -4AC > 0\\
0< \dfrac{-B-\sqrt{\Delta}}{2A}<1.
\end{cases}
\end{align*}
Moreover, if $[0,1] \setminus \tilde{I} $ is nonempty, then the map 
$(u,v)\rightarrow \mathbb{E}[\phi(U_{u,v})]$ is (strictly) decreasing in $v$ and convex in $u$.
This is a straightforward adaptation of Lemma 4.1 in \cite{MZ}.

Figure~\ref{fig:3-3} provides the shape of $v_x$ and $v_y$ for the environments
$\varepsilon_0=(6,1,4,2,1,5)$ and $\varepsilon_1 = (3,3,2,5.5,5,1)$. Once again, 
the switched process has four regimes depending on the jump rates.

 \begin{rem}
 We see a surprising result : although both vector fields are persistent, the stochastic process may 
 lead to the extinction of one of the two species. 
 \end{rem} 

\begin{figure}[!ht]
\includegraphics[scale=0.8]{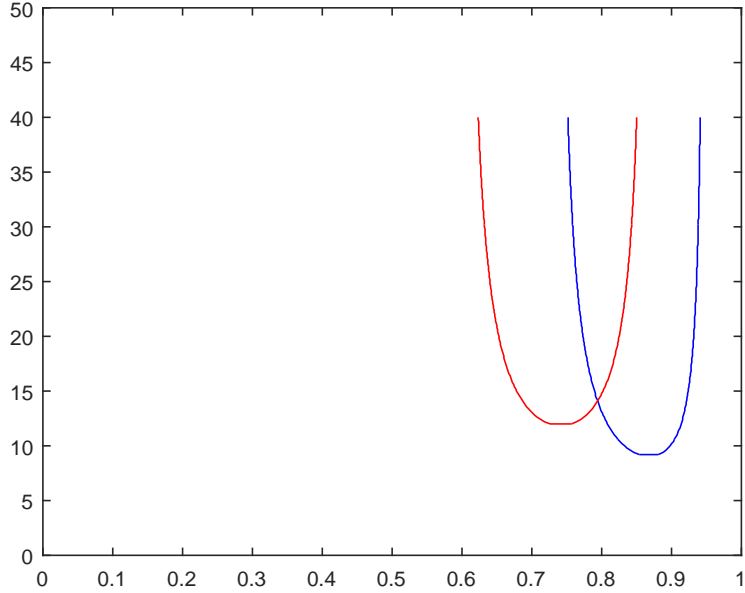}
\caption{Graph of $v_y$ (blue curve) and $v_x$ (red curve) for the 
environments $\varepsilon_0=(6,1,4,2,1,5)$ and $\varepsilon_1 = (3,3,2,5.5,5,1)$.}
\label{fig:3-3}
\end{figure}

\section{General case: proof of Theorem \ref{th:99}} 

The following array presents, for any couple of types, an example of two 
environments that are associated to a stochastic process with four regimes 
depending on the jump rates. The first line has been obtained in \cite{BL}. 
The second line is studied in Section 2. The fifth line is studied in Section 5. 
The reader can easily check that the other cases correspond to 
Figure \ref{fig:contour}.


\begin{center}
\begin{tabular}{ |c||c|c|c|c|c|c||c|c|c|c|c|c|}
 \hline
 $(F_0,F_1)$&$a_0$&$b_0$&$c_0$&$d_0$&$\alpha_0$&$\beta_0$&$a_1$&$b_1$&$c_1$&$d_1$&$\alpha_1$&$\beta_1$\\
 \hline
 \hline
  Type 1-1  &1&1&2&2&1&5&3&3&4&3.5&5&1\\
 \hline
 Type 1-2  &1&5&2&8&3&3&2&11&1&9&2&1.8\\
 \hline
 Type 1-3 &1&1&3.5&2&1&5&5&3&4&5.5&5&1\\
\hline
 Type 1-4 &1&1&2&3.5&1&5&3&4&4&3&5&1\\
 \hline
 Type 3-3 &6&1&4&2&1&5&3&3&2&5.5&5&1\\
 \hline
 Type 3-4 &6&1&4&8&1&5&3&10&4&7&5&1\\
 \hline
 Type 4-4 &2&2&1&1&5&1&7&3.5&4&3&1&5\\
 \hline
\end{tabular}
\end{center}

\begin{figure}[htp]
  \centering
  \subfloat[Type 1-3]{\label{fig:edge-a}\includegraphics[scale=0.5]{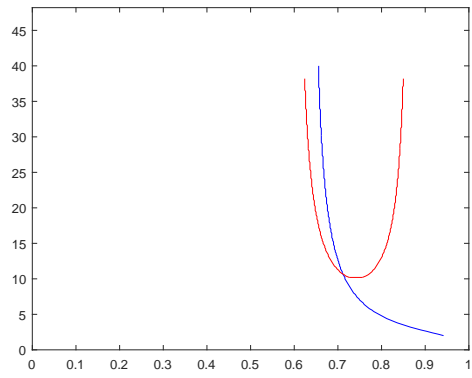}}
  \hspace{5pt}
   \subfloat[Type 1-4]{\label{fig:contour-b}\includegraphics[scale=0.5]{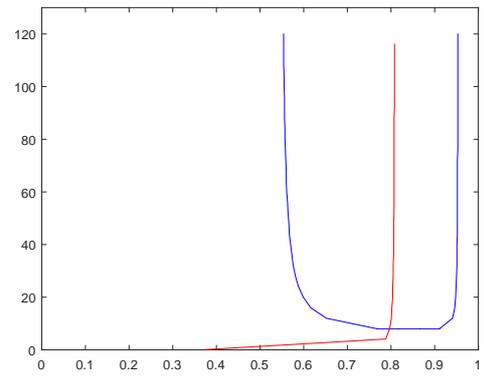}}
  \hspace{5pt}
  \subfloat[Type 3-4]{\label{fig:contour-c}\includegraphics[scale=0.5]{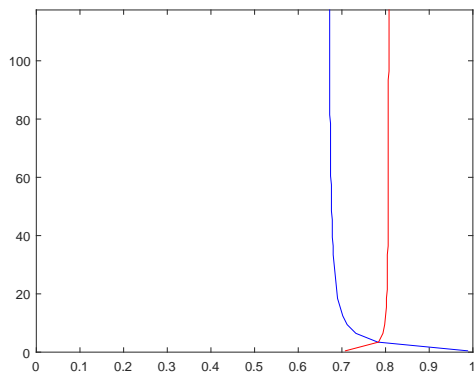}}
  \hspace{5pt}
  \subfloat[Type 4-4]{\label{fig:contour-d}\includegraphics[scale=0.5]{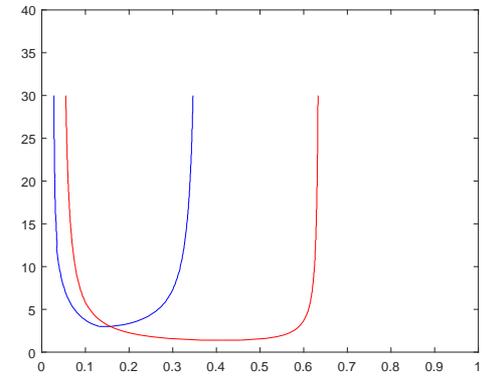}}
  \caption{Graph of $v_y$ (blue curve) and $v_x$ (red curve) 
   for the four last cases.}
  \label{fig:contour}
\end{figure}

\paragraph*{Acknowledgements.}
This work has been written during the stay of Tran Hoa Phu in Tours for his intership 
of the French-Vietnam Master in Mathematics. We acknowledge financial support 
from the French ANR project ANR-12-JS01-0006-PIECE.

\bibliography{Malrieu-Phu}
\bibliographystyle{amsplain}

{\footnotesize

\noindent Florent \textsc{Malrieu}, 
e-mail: \texttt{florent.malrieu(AT)univ-tours.fr}

\medskip

\noindent\textsc{Laboratoire de Math\'ematiques et Physique Th\'eorique 
(UMR CNRS 7350), F\'ed\'eration Denis Poisson (FR CNRS 2964), Universit\'e 
Fran\c cois-Rabelais, Parc de Grandmont, 37200 Tours, France.}

\bigskip

   \noindent Tran Hoa~\textsc{Phu},
e-mail: \texttt{thphu1(AT)yahoo.com}

 \medskip

 \noindent\textsc{Laboratoire de Math\'ematiques et Physique Th\'eorique 
(UMR CNRS 7350), F\'ed\'eration Denis Poisson (FR CNRS 2964), Universit\'e 
Fran\c cois-Rabelais, Parc de Grandmont, 37200 Tours, France.}
}

\end{document}